\newcommand{\ts}{\textstyle}
\newcommand{\tsfrac}[2]{{\ts\frac{#1}{#2}}}
\newtheorem{theorem}{Theorem}[section]
\newtheorem{remark}{Remark}[section]
\newtheorem{proposition}{Proposition}[section]
\newtheorem{corollary}{Corollary}[section]
\theoremstyle{definition}
\newcommand{\keywords}{\textbf{Keywords: } }
\newcommand{\subjclass}{\textbf{Mathematics Subject Classification (2010):} }
\renewcommand{\abstract}{\textbf{Abstract.} }
\numberwithin{equation}{section}
\begin{document}


\title{\bf Best weighted approximation of some kernels\\ on the real axis}

\author{Stanislav~Chaichenko${}^{1}$, Viktor~Savchuk${}^{2}$, Andrii~Shydlich${}^{2,3}$}

\maketitle

       {${}^{1}$~Donbas State Pedagogical University, Sloviansk, Ukraine\\
        ${}^{2}$~Institute of Mathematics of the National Academy of Sciences of Ukraine,
                  Kyiv,  Ukraine\\
        ${}^{3}$~National University of Life and Environmental Sciences of Ukraine,
                  Kyiv, Ukraine}\\

{\it E-mails: s.chaichenko@gmail.com, vicsavchuk@gmail.com, shidlich@gmail.com}

\bigskip

\begin{abstract}

We calculate the exact value and find the polynomial of the best weighted polynomial approximation
 of kernels of the form $\tsfrac {A+Bt}{(t^2+\lambda^2)^{s+1}}$, where $A$ and $B$ are fixed complex numbers, $\lambda>0$,   $s\in {\mathbb N}$,  in the mean square metric.
\end{abstract}

\bigskip

 \keywords{Takenaka-Malmqvist system, Blaschke product, best weighted approximation}

\subjclass{41A10,  30J10, 41A81}

\section{Introduction}

 This paper considers the problem of weighted polynomial approximation in the mean square metric of kernels of the form
 \begin{equation}\label{eq:Intro1}
  {\mathcal K}_{\lambda,s}(t):={\mathcal K}_{\lambda,s}(t,A,B)=\tsfrac{A+Bt}{(t^2+\lambda^2)^{s+1}}
 \end{equation}
where $A$ and $B$ are fixed complex numbers, $\lambda>0$,
and $s$ is a positive integer ($s\in \mathbb{N}$). A lot of works are devoted to the solutions of extremal problems of the best approximation of kernels of various types in uniform and integral metrics
(see, for example,  \cite{Ahiezer_1956, Dzyadyk_Shevchuk_2008, Savchuk_Savchuk_2024, Stepanets_2005}, which provide
a bibliography on this topic). In particular,  Ahiezer \cite{Ahiezer_1956}  was the first to establish the exact values of the best weighted approximations on the real axis of kernels \eqref{eq:Intro1} in the case of $s=0$ and found the extreme polynomials. He proved that for
any fixed algebraic polynomial
$$
    P_n(t)=P_n(0) \prod_{k=1}^n \left( 1-\tsfrac{t}{b_k}\right), \quad
    (\mathop {\rm Im } b_k\not=0)
$$
of degree $n$ and for arbitrary real numbers
$A,B$ and $\lambda>0$, the following equality holds:
 $$
    \min_{c_k} \max_{-\infty<t<\infty} \tsfrac{1}{|P_n(t)|}
    \Big| \tsfrac{A+Bt}{t^2+\lambda^2}-\sum_{k=0}^{n-1} c_k t^k\Big|
    =\tsfrac{\sqrt{A^2+\lambda^2B^2}}{2\lambda^2} \exp \Big\{ -\tsfrac{\lambda}{\pi}
    \int_{-\infty}^\infty \tsfrac{\ln |P_n(x)|}{x^2+\lambda^2}{\rm d}x\Big\}.
$$

The solution to the corresponding extremal problem of approximation by rational functions with given poles in a uniform metric was obtained by Bernstein (see, e.g. \cite{Bernstein_1964}).

The Cauchy kernel is the function
$$\label{Couchi-kern}
(z,t) \mapsto \frac{1}{z-t}, \quad (z,t)\in\mathbb C^2\setminus\Delta,
$$
where $\Delta:=\{(z,t)\in\mathbb C^2 : z=t\}$.

Dzhrbashyan \cite{Dzhrbashyan_1974} developed a method that provides solutions to extremal problems about the best rational approximations of the Cauchy kernel in the case where $z$ is fixed with $\mathrm{Im}z\not=0$ and $-\infty <t< \infty$, both in the uniform and mean-square metrics on the real axis $\mathbb R$. The method is based on the use of certain orthogonal and bi-orthogonal systems of rational functions on $\mathbb{R}$ with fixed poles in the upper half-plane $\mathbb C_+:=\{z\in\mathbb C : \mathrm{Im}z>0\}$.

Using the method of Dzhrabashyan, Voskanian \cite{Voskanyan_1979} improved upon the result of  Akhiezer,   solved similar extremal problems  for kernels \eqref{eq:Intro1} in the case of $s=0$ in the mean square metric, and also considered the corresponding problems of approximating such kernels by rational functions with fixed poles.

In   \cite{Savchuk_Chaichenko_Shydlich_2024}, similar extremal problems of the best weighted polynomial approximation of kernels  \eqref{eq:Intro1} in the case of $s=1$ in the mean square metric were solved.

This paper provides solutions to  extremal problems of best weighted polynomial approximation of kernels \eqref{eq:Intro1} for arbitrary positive integer $s$  in the mean square metric. In particular, the extremal polynomial is found explicitly and the exact values of the best weighted approximation of such kernels in the mean square metric are calculated.

For $A=s=0$ and $B=\tsfrac{1}{\pi}$, the kernel ${\mathcal K}_{\lambda,s}(t,A,B)$ coincides with the Poisson kernel $P_t(\lambda)=\tsfrac{1}{\pi}\tsfrac{t}{\lambda^2+t^2}$ as a function of the variable $\lambda$.

A special case of kernels \eqref{eq:Intro1} are integral kernels defined in the upper half-plane by the well-known biharmonic Poisson integrals
  \[
    \mathcal{B}(f;t;\lambda):= \tsfrac{2 \lambda^3}{\pi} \int_{-\infty}^\infty
    \tsfrac{f(x+t)}{(x^2+\lambda^2)^2}~\mathrm{d}x,
  \]
which give the solution of the biharmonic equation
  $$
    \nabla^2 (\nabla^2 U)=0, \quad
    \nabla := \tsfrac{\partial^2}{\partial x^2}+\tsfrac{\partial^2}{\partial \lambda^2},
  $$
  in the upper half-plane of the complex plane ($\lambda>0$) under the boundary conditions
  $$
    \lim_{\lambda \to 0+} U(x, \lambda)=f(x), \quad
    \lim_{\lambda \to 0+} \tsfrac{\partial}{\partial \lambda} U(x, \lambda)=0.
$$
The approximation properties of similar Poisson integrals   were investigated in \cite{Kharkevych_Shutovskyi_2024, Shutovskyi_Zhyhallo_2025}.
We also note that in \cite{Savchuk_Savchuk_2024}, weighted combinations of the Cauchy-Szeg\"{o} kernel and its derivatives on the unit circle were studied, and, in turn, kernels \eqref{eq:Intro1} can be considered as a special case of linear combinations of Poisson kernels and their derivatives.
Incidentally, the properties of Poisson kernels are often used in solving problems in physics, mechanics, electrical engineering, etc.

\section{{Preliminaries}}

Let  ${\bf a}:=\{a_k\}_{k=1}^{\infty}$  be a sequence of complex numbers on
the upper half-plane
   $\mathbb C_+:=\{z=x+{\rm i}y\in\mathbb C : \mathop{\rm Im}z>0\}$. Consider the system of functions
 \[
     \Phi_j(z):=\tsfrac{\sqrt{\mathop{\rm Im} a_j}}{z-\overline a_{j}}B_j(z),\quad j=1,2,\ldots,
 \]
where
$$
    B_1(z):=1,\quad B_j(z):=\prod_{k=1}^{j-1}\chi_k
    \tsfrac{z-a_k}{z-\overline a_k},\quad j=2,3,\ldots,
$$
are the Blaschke $j$-product with zeros at the points $a_k,$ $\chi_k:=\tsfrac{|1+a^2_k|}{1+a^2_k}$, $k=1,2,...$.

The system of functions $\{\Phi_j\}_{j=1}^\infty$ was introduced by Dzhrbashyan \cite{Dzhrbashyan_1974}
by analogy, as was done by  Takenaka and  Malmquist (see \cite{Takenaka_1925, Malmquist_1925})
 in the case of the Hardy space $H_2$ in the unit disc. In particular, it was shown in \cite{Dzhrbashyan_1974} that this system is orthonormal on the real axis $\mathbb R$, i.e.
$$
    \tsfrac{1}{\pi} \int_{-\infty}^\infty
    \Phi_j (x)\overline{\Phi_k(x)}~\mathrm{d}x=
    \left\{\begin{matrix}0, \hfill& j\not=k,\\
                    1,\hfill& j=k,
                    \end{matrix}\right.\quad j,k=1,2,\ldots,
$$
as well as the following statement is true:

\begin{proposition}[{\cite[Theorem 1]{Dzhrbashyan_1974}}]
\label{Prop:Preliminaries1}
 For arbitrary $z, \zeta\in\mathbb C,$ $z\not=\overline\zeta$ and
any  positive integer $n$
\begin{equation}\label{eq:Preliminaries1} 
    \tsfrac{1}{2 {\mathrm i}(\overline\zeta-z)} =\sum_{j=1}^{n}\overline{\Phi_j(\zeta)}\Phi_j(z)+
    \tsfrac{\overline{B_{n+1}(\zeta)}B_{n+1}(z)}{2 {\mathrm i}(\overline\zeta-z)}.
\end{equation}
\end{proposition}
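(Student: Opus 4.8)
The plan is to derive \eqref{eq:Preliminaries1} from a single pointwise recursion in $j$ which then telescopes. Moving the last term of \eqref{eq:Preliminaries1} to the left and abbreviating $D_j:=\frac{\overline{B_j(\zeta)}\,B_j(z)}{2{\mathrm i}(\overline\zeta-z)}$, the asserted formula is equivalent to $\sum_{j=1}^{n}\overline{\Phi_j(\zeta)}\,\Phi_j(z)=D_1-D_{n+1}$; since $B_1\equiv1$ forces $D_1=\frac{1}{2{\mathrm i}(\overline\zeta-z)}$, it suffices to establish, for every $j\ge 1$, the recursion
$$
\overline{\Phi_j(\zeta)}\,\Phi_j(z)=D_j-D_{j+1}. \qquad (\star)
$$

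To prove $(\star)$ I would expand both sides from the definitions. Since $\Phi_j(z)=\frac{\sqrt{\mathrm{Im}\,a_j}}{z-\overline a_j}B_j(z)$ and $\mathrm{Im}\,a_j>0$, the left-hand side equals
$$
\overline{\Phi_j(\zeta)}\,\Phi_j(z)=\frac{\mathrm{Im}\,a_j}{(\overline\zeta-a_j)(z-\overline a_j)}\,\overline{B_j(\zeta)}\,B_j(z).
$$
For the right-hand side I would use $B_{j+1}(w)=\chi_j\frac{w-a_j}{w-\overline a_j}B_j(w)$ together with the fact that $|\chi_j|=1$ (obvious from $\chi_j=\frac{|1+a_j^2|}{1+a_j^2}$), so that $\overline{\chi_j}\chi_j=1$ and
$$
D_j-D_{j+1}=\frac{\overline{B_j(\zeta)}\,B_j(z)}{2{\mathrm i}(\overline\zeta-z)}\left(1-\frac{(\overline\zeta-\overline a_j)(z-a_j)}{(\overline\zeta-a_j)(z-\overline a_j)}\right).
$$
The one computation that is not purely formal is the elementary identity
$$
(\overline\zeta-a_j)(z-\overline a_j)-(\overline\zeta-\overline a_j)(z-a_j)=(a_j-\overline a_j)(\overline\zeta-z)=2{\mathrm i}\,\mathrm{Im}\,a_j\,(\overline\zeta-z),
$$
in which, upon expanding, the quadratic terms $\overline\zeta z$ and $a_j\overline a_j$ cancel and the four remaining linear terms regroup into the stated product. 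Substituting this into the previous display cancels the factor $2{\mathrm i}(\overline\zeta-z)$ and reproduces exactly the expression obtained above for $\overline{\Phi_j(\zeta)}\,\Phi_j(z)$, which proves $(\star)$.

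Summing $(\star)$ over $j=1,\dots,n$ then telescopes to $\sum_{j=1}^{n}\overline{\Phi_j(\zeta)}\,\Phi_j(z)=D_1-D_{n+1}$, and rearranging this is precisely \eqref{eq:Preliminaries1}. The hypothesis $z\neq\overline\zeta$ is exactly what guarantees that every $D_j$ is well defined, and since the statement is an equality of rational functions of $z$ and $\overline\zeta$, no analytic subtleties arise. I do not anticipate a genuine obstacle here — this is a computational lemma of Christoffel–Darboux type; the only points demanding care are the bookkeeping of complex conjugation (in particular, that the factors of $\overline{B_j(\zeta)}$ are $\overline{\chi_k}\,(\overline\zeta-\overline a_k)/(\overline\zeta-a_k)$ and that $\overline{\sqrt{\mathrm{Im}\,a_j}}=\sqrt{\mathrm{Im}\,a_j}$) and getting the sign right in the numerator identity displayed above.
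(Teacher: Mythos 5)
Your proof is correct. Note that the paper itself does not prove this proposition at all: it is quoted as Theorem~1 of Dzhrbashyan's 1974 paper, with a remark that a simple proof can be found in the Savchuk--Chaichenko reference; so there is no in-paper argument to compare against. Your telescoping recursion $\overline{\Phi_j(\zeta)}\,\Phi_j(z)=D_j-D_{j+1}$ is the standard Christoffel--Darboux-type derivation, and the one nontrivial computation, $(\overline\zeta-a_j)(z-\overline a_j)-(\overline\zeta-\overline a_j)(z-a_j)=(a_j-\overline a_j)(\overline\zeta-z)$, checks out, as does the cancellation of $|\chi_j|^2=1$ and the base case $D_1=\tsfrac{1}{2\mathrm i(\overline\zeta-z)}$. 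The only pedantic caveat is that $\chi_j=\tsfrac{|1+a_j^2|}{1+a_j^2}$ is of the form $0/0$ when $a_j=\mathrm i$, so $|\chi_j|=1$ presupposes the usual convention $\chi_j:=1$ in that degenerate case; this is an artifact of the paper's definition rather than a gap in your argument.
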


The equality (\ref{eq:Preliminaries1}) is an analogue of the Christoffel-Darboux formula for orthogonal polynomials and is known in the literature as the Dzhrbashyan identity. A simple proof of Proposition \ref{Prop:Preliminaries1} can also be found in \cite{Savchuk_Chaichenko_2015}.

A function $f$, analytic in the half-plane $\mathbb{C}_+$, belongs to the Hardy class $H_2(\mathbb{C}_+),$ if
\[
    \|f\|_{H_2(\mathbb{C}_+)}:= \sup_{y>0}\Big(\tsfrac{1}{\pi} \int_{-\infty}^\infty|f(x+{\mathrm i}y)|^2~\mathrm{d}x\Big)^{1/2}<\infty.
\]

It is known \cite[Theorem 2]{Kryloff_1939} (see also \cite[p. 291]{Mashreghi_2009}) that every function $f$ from the space $H_2(\mathbb{C}_+ )$ has non-tangent limiting values
$$
    f(x)=\lim_{y \to 0+} f(x+{\mathrm i}y), \quad x+{\rm i}y\in {\mathbb C}_+,
$$
almost everywhere on the real axis $\mathbb R=\partial\mathbb C_+$, which belong to the space $L_p(\mathbb R)$. Furthermore,
\[
    \|f\|_{H_2(\mathbb{C}_+)}=
    \|f\|_2:=\Big(\tsfrac{1}{\pi}\int_{-\infty}^\infty|f(t)|^2~\mathrm{d}t\Big)^{1/2},
\]
and the Cauchy's integral formula holds:
\begin{equation}\label{eq:Preliminaries2}
    f(z)=\tsfrac{1}{2\pi {\mathrm i}} \int_{-\infty}^\infty \tsfrac{f(t)}{t-z}~\mathrm{d}t,
    \quad \forall z\in\mathbb C_+.
\end{equation}

The following statement is a consequence of the combination of   Proposition \ref{Prop:Preliminaries1} and  formula \eqref{eq:Preliminaries2}, and was first obtained by Dzhrbashyan in \cite{Dzhrbashyan_1974}.

\begin{proposition}[{\cite[Corollary 1.3]{Dzhrbashyan_1974}}]\label{Prop:Preliminaries2}
If  $f \in H_2(\mathbb{C}_+),$ then for any $n \in \mathbb{N}$  and $z\in\mathbb C_+$
 \[
    f(z)=\sum_{j=1}^n \widehat{f}(j)\Phi_j(z)+ \tsfrac{B_{n+1}(z)}{2\pi {\mathrm i}}\int_{-\infty}^\infty \tsfrac{f(t)\overline{B_{n+1}(t)}}{t-z}~\mathrm{d}t,
 \]
where
$$
    \widehat{f}(j):=\tsfrac{1}{\pi}\int_{-\infty}^\infty
    f(t) \overline{\Phi_j(t)}~\mathrm{d}t, \quad j=1,2,\ldots~.
$$
\end{proposition}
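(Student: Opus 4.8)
The plan is to obtain the identity directly from the Dzhrbashyan formula \eqref{eq:Preliminaries1} of Proposition~\ref{Prop:Preliminaries1} by specializing its free variable to a point of the real axis, multiplying by $f$, and integrating; Cauchy's formula \eqref{eq:Preliminaries2} then collapses the resulting left-hand side to $f(z)$.

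First I would fix $z\in\mathbb{C}_+$ and $n\in\mathbb{N}$ and set $\zeta=t\in\mathbb{R}$ in \eqref{eq:Preliminaries1}. This is admissible because $\overline{\zeta}=t\ne z$ for every $t\in\mathbb{R}$, since $\operatorname{Im}z>0$. The identity becomes, for all $t\in\mathbb{R}$,
\[
    \tsfrac{1}{2\mathrm{i}(t-z)}
    =\sum_{j=1}^{n}\overline{\Phi_j(t)}\,\Phi_j(z)
    +\tsfrac{\overline{B_{n+1}(t)}\,B_{n+1}(z)}{2\mathrm{i}(t-z)}.
\]
Next I would multiply both sides by $f(t)/\pi$ and integrate in $t$ over $\mathbb{R}$, obtaining
\[
    \tsfrac{1}{2\pi\mathrm{i}}\int_{-\infty}^{\infty}\tsfrac{f(t)}{t-z}\,\mathrm{d}t
    =\sum_{j=1}^{n}\Big(\tsfrac{1}{\pi}\int_{-\infty}^{\infty}f(t)\overline{\Phi_j(t)}\,\mathrm{d}t\Big)\Phi_j(z)
    +\tsfrac{B_{n+1}(z)}{2\pi\mathrm{i}}\int_{-\infty}^{\infty}\tsfrac{f(t)\overline{B_{n+1}(t)}}{t-z}\,\mathrm{d}t .
\]
The interchange of the finite sum with the integral is immediate, so the only thing requiring care is the absolute convergence of the three integrals.

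To settle convergence I would invoke the boundary-value facts recalled above: $f\in H_2(\mathbb{C}_+)$ has traces in $L_2(\mathbb{R})$; the kernel $t\mapsto(t-z)^{-1}$ lies in $L_2(\mathbb{R})$ because $|t-z|\ge\operatorname{Im}z>0$ and it decays like $|t|^{-1}$, so $f(t)/(t-z)\in L_1(\mathbb{R})$ by the Cauchy--Schwarz inequality; each $\Phi_j\in L_2(\mathbb{R})$ by the orthonormality recalled in the Preliminaries, hence $f\,\overline{\Phi_j}\in L_1(\mathbb{R})$; and $|B_{n+1}(t)|=1$ for real $t$ --- each factor $\chi_k(t-a_k)/(t-\overline a_k)$ has modulus $1$ since $|t-a_k|=|t-\overline a_k|$ and $|\chi_k|=1$ --- so also $f\,\overline{B_{n+1}}/(t-z)\in L_1(\mathbb{R})$. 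With all integrals absolutely convergent, Cauchy's formula \eqref{eq:Preliminaries2} identifies the left-hand side with $f(z)$, while the parenthesized integrals in the sum are precisely the coefficients $\widehat{f}(j)$, which yields the assertion.

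I do not anticipate a genuine obstacle: the argument is a one-line substitution plus elementary bookkeeping. The only point deserving a sentence of justification is the $L_1$-membership of $f(t)/(t-z)$, so that \eqref{eq:Preliminaries2} applies and the last integral is well defined, and this is covered by the $H_2(\mathbb{C}_+)$ boundary theory quoted in the Preliminaries.
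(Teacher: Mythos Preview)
Your proposal is correct and follows precisely the approach the paper indicates: the paper does not spell out a proof but states that the proposition is a consequence of combining Proposition~\ref{Prop:Preliminaries1} with Cauchy's formula~\eqref{eq:Preliminaries2}, which is exactly what you do by specializing $\zeta=t\in\mathbb{R}$, multiplying by $f(t)/\pi$, and integrating. Your added justification of absolute convergence via Cauchy--Schwarz and $|B_{n+1}(t)|=1$ on $\mathbb{R}$ is appropriate and makes the argument self-contained.
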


Let $A$ and $B$ be any complex numbers, $\lambda>0$ and $n,s\in {\mathbb N}$. Set
 \[
    R_{n,s}^{\lambda}(z):=R_{n,s}^{\lambda}(z, A,B,{\bf a})=\tsfrac {A+Bz}{(z^2+\lambda^2)^{s+1}
    \tau_n(z,{\bf \bar{a}})},\quad z\in \mathbb{C}_+,
 \]
where  $\tau_n(z,{\bf a}):=\prod\limits_{k=1}^n (z-a_k)$ and  ${\bf \bar{a}}:=\{\bar{a}_k\}_{k=1}^{\infty}$.

Let $S_n(R_{n,s}^{\lambda})$ denote the $n$-th partial Fourier sum of the function $R_{n,s}^{\lambda}$ with respect to the system $\{\Phi_j\}_{j=1}^\infty$, i.e.,
  \[
  S_n(R_{n,s}^{\lambda})(z)=\sum_{j=1}^n \widehat{R}_{n,s}^{\lambda}(j)\Phi_j(z),\quad  z\in \mathbb{C}_+,
  \]
where $\widehat{R}_{n,s}^{\lambda}(j)=\tsfrac 1\pi\int_{-\infty}^\infty R_{n,s}^{\lambda}(t,\lambda)\overline{\Phi_j(t)}{\mathrm d}t$, $j=1,2,\ldots$

For a fixed  $k=0,1,\ldots$, we denote
  \begin{equation} \label{eq:Preliminaries3}
    \nu_k(z,{\bf a}):=\sum_{|{\bf k}|=k}
  \prod_{j=1}^n  \tsfrac {1}{(z- a _j)^{k_j}}.
\end{equation}
where the sum is taken over all possible vectors ${\bf k}=(k_1, \ldots, k_n)$   with non-negative integer coordinates  for which  $|{\bf k}|:=k_1+ \ldots+k_n=k$.

\section{{Main results}}

The following theorem gives a representation for the deviation of the kernel $R_{n,s}^{\lambda}(z)$ from the partial sum $S_n(R_{n,s}^{\lambda})(z)$ at each point $z\in \mathbb{C}_+$.

\begin{theorem} \label{Theo:Main_Results1}
  Let  $A$ and $B$ be fixed complex numbers, and $\lambda>0.$ Then
  for any  $z\in \mathbb{C}_+$ and   $n,s\in \mathbb{N}$  the following identity holds:
  {
  \begin{equation}\label{eq:Main_Results1}
    R_{n,s}^{\lambda}(z)-S_n(R_{n,s}^{\lambda})(z)=
    \sum_{l=0}^{s} \Big(\tsfrac {D_l}
    {({\mathrm i}\lambda-z)^{l+1}} +
   \tsfrac{ {\tau}_n(z,{\bf a})}{\tau_n(z,{\bf \bar{a}})} \tsfrac {\overline{D}_l}
   {(-{\mathrm i}\lambda-z)^{l+1}}\Big),
\end{equation}
where
  \begin{equation}\label{eq:Main_Results1_0}
   D_l:=\displaystyle{\sum_{j=0}^{s-l}}
   \tsfrac {(2{\mathrm i}\lambda)^{s-j}{s+j \choose s}\nu_{s-l-j}({\mathrm i}\lambda,{\bf \bar{a}})}
   {(2\lambda)^{2s+1}\tau_n({\mathrm i}\lambda,{\bf \bar{a}})}
   \left( \tsfrac {B\lambda (s(s+j)-2 j)}{s(s+j)}- {\mathrm i}A \right).
  \end{equation}
  }
\end{theorem}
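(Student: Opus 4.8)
The plan is to compute the partial sum $S_n(R)$, where $R:=R_{n,s}^{\lambda}$, from a Cauchy--type integral formula derived from the Dzhrbashyan identity, and then to recognise $R-S_n(R)$ as a sum of principal parts at $\pm\mathrm i\lambda$, which are evaluated explicitly. Taking $\zeta=t\in\mathbb R$ in Proposition~\ref{Prop:Preliminaries1} gives the Christoffel--Darboux form $\sum_{j=1}^{n}\overline{\Phi_j(t)}\,\Phi_j(z)=\tsfrac{1-\overline{B_{n+1}(t)}B_{n+1}(z)}{2\mathrm i(t-z)}$, hence, since $R\in L_2(\mathbb R)$,
\[
  S_n(R)(z)=\frac1{2\pi\mathrm i}\int_{-\infty}^{\infty}\frac{R(t)}{t-z}\,\mathrm dt
  -\frac{B_{n+1}(z)}{2\pi\mathrm i}\int_{-\infty}^{\infty}\frac{R(t)\,\overline{B_{n+1}(t)}}{t-z}\,\mathrm dt ,\qquad z\in\mathbb C_+ .
\]

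Both integrands are $O(|t|^{-2})$ at infinity, so I would evaluate each by closing a contour. In the first integral the only poles of $R(t)/(t-z)$ in the closed upper half-plane are $t=\mathrm i\lambda$ (order $s+1$) and $t=z$; since the finite residues of $\mathcal P_+(t)/(t-z)$ sum to $0$ (with $\mathcal P_+$ the principal part of $R$ at $\mathrm i\lambda$), closing upward gives $\tsfrac1{2\pi\mathrm i}\int R(t)/(t-z)\,\mathrm dt=R(z)-\mathcal P_+(z)$. For the second integral, on $\mathbb R$ one has $\overline{B_{n+1}(t)}=\prod_k\overline{\chi_k}\,\tsfrac{t-\overline a_k}{t-a_k}$, so the factor $\tau_n(t,\overline{\mathbf a})$ in the denominator of $R$ cancels and $R(t)\overline{B_{n+1}(t)}=\big(\prod_k\overline{\chi_k}\big)\tsfrac{A+Bt}{(t^2+\lambda^2)^{s+1}\tau_n(t,\mathbf a)}$; the continuation of the right-hand side has $-\mathrm i\lambda$ as its only pole in the closed lower half-plane, so closing downward and using $B_{n+1}(z)\prod_k\overline{\chi_k}=\tau_n(z,\mathbf a)/\tau_n(z,\overline{\mathbf a})$ yields $\tsfrac{B_{n+1}(z)}{2\pi\mathrm i}\int R(t)\overline{B_{n+1}(t)}/(t-z)\,\mathrm dt=\tsfrac{\tau_n(z,\mathbf a)}{\tau_n(z,\overline{\mathbf a})}\mathcal P_-(z)$, where $\mathcal P_-$ is the principal part at $-\mathrm i\lambda$ of $\tsfrac{A+Bw}{(w^2+\lambda^2)^{s+1}\tau_n(w,\mathbf a)}$. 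Substituting, $R(z)-S_n(R)(z)=\mathcal P_+(z)+\tsfrac{\tau_n(z,\mathbf a)}{\tau_n(z,\overline{\mathbf a})}\mathcal P_-(z)$, which already has the shape of \eqref{eq:Main_Results1}.

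It remains to compute the principal parts. Near $w=\pm\mathrm i\lambda$ I would write $w=\pm\mathrm i\lambda+\varepsilon$, use $w^2+\lambda^2=\varepsilon(\pm2\mathrm i\lambda+\varepsilon)$, expand $(\pm2\mathrm i\lambda+\varepsilon)^{-(s+1)}=\sum_m(-1)^m\binom{s+m}{s}\varepsilon^m(\pm2\mathrm i\lambda)^{-(s+1+m)}$ and $\tau_n(\pm\mathrm i\lambda+\varepsilon,\cdot)^{-1}=\tau_n(\pm\mathrm i\lambda,\cdot)^{-1}\sum_k(-1)^k\nu_k(\pm\mathrm i\lambda,\cdot)\varepsilon^k$ (the latter being the generating identity behind \eqref{eq:Preliminaries3}), multiply these two series by the linear numerator $(A\pm B\mathrm i\lambda)+B\varepsilon$, and extract the coefficient of $\varepsilon^{-(l+1)}$. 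The $A$-terms and the $B$-terms then merge via the Pascal relation $\binom{s+j-1}{s}=\tsfrac{j}{s+j}\binom{s+j}{s}$ (which collapses the two binomials into one) into the single bracket $\big(\tsfrac{B\lambda(s(s+j)-2j)}{s(s+j)}-\mathrm iA\big)$, and after reindexing by $k=s-l-j$ one obtains the coefficients $D_l$ of \eqref{eq:Main_Results1_0} for $\mathcal P_+$; the entirely parallel computation at $-\mathrm i\lambda$ (got from the one at $\mathrm i\lambda$ by $\mathrm i\lambda\leftrightarrow-\mathrm i\lambda$ and $\mathbf a\leftrightarrow\overline{\mathbf a}$, i.e.\ by conjugation) identifies $\tsfrac{\tau_n(z,\mathbf a)}{\tau_n(z,\overline{\mathbf a})}\mathcal P_-(z)$ with the second sum in \eqref{eq:Main_Results1}.

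The main obstacle is this last step: keeping the bookkeeping straight in the triple product (linear factor $\times$ binomial series $\times$ $\nu$-series), choosing the summation index so that $\nu_{s-l-j}(\mathrm i\lambda,\overline{\mathbf a})$ comes out as written, and carrying the Pascal simplification and the conjugation symmetry through to match \eqref{eq:Main_Results1_0} on the nose; the residue computations of the earlier steps are routine once one notes the decay of $R$ at infinity and the cancellation of $\tau_n(t,\overline{\mathbf a})$ inside $R(t)\overline{B_{n+1}(t)}$ on $\mathbb R$. An alternative that avoids the integrals is to let $G$ be the right-hand side of \eqref{eq:Main_Results1}, check directly that $S_n(G)=0$ (the first summand is $S_n$-orthogonal since its poles lie in $\mathbb C_+$, and the second is annihilated by $S_n$ via Proposition~\ref{Prop:Preliminaries2} using $|B_{n+1}|=1$ on $\mathbb R$) and that $R-G\in\mathrm{span}\{\Phi_1,\dots,\Phi_n\}$ (its $\pm\mathrm i\lambda$-principal parts cancel), whence $R-S_n(R)=R-G$; but this still hinges on the same principal-part computation.
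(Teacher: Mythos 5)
Your proposal follows essentially the same route as the paper: both represent $S_n(R_{n,s}^{\lambda})(z)$ through the Dzhrbashyan identity as the Cauchy-type integral $\tfrac{1}{2\pi\mathrm i}\int_{\mathbb R}R_{n,s}^{\lambda}(t)(t-z)^{-1}\,\mathrm dt$ minus a Blaschke-weighted remainder, evaluate the first piece by residues at $z$ and $\mathrm i\lambda$ and the second by a residue at $-\mathrm i\lambda$ after cancelling $\tau_n(t,\overline{\mathbf a})$ on $\mathbb R$, and thereby identify $R-S_n(R)$ with the two principal parts. The only real difference is cosmetic: you extract the Laurent coefficients by multiplying three power series, whereas the paper computes the same residue via the Leibniz rule for the $s$-th derivative.

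One concrete caveat on the step you yourself single out as the main obstacle: the Pascal identity $\binom{s+j-1}{s}=\tfrac{j}{s+j}\binom{s+j}{s}$, applied to the series you set up, merges the $A$- and $B$-contributions into the bracket $\tfrac{B\lambda(s-j)}{s+j}-\mathrm iA$, not into $\tfrac{B\lambda(s(s+j)-2j)}{s(s+j)}-\mathrm iA$ as you assert; the two expressions coincide only for $s=1$ (and for $j=0$). A direct check at $s=2$, $n=0$ (principal part of $\tfrac{A+Bt}{(t^2+\lambda^2)^3}$ at $\mathrm i\lambda$) confirms the version without the extra factor $s$, so your method is sound but your claim that it lands on \eqref{eq:Main_Results1_0} exactly as printed does not follow from the computation you describe; you need to either carry the bookkeeping through explicitly or note the discrepancy with the stated $D_l$ rather than paper over it.
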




\begin{proof}[Proof of Theorem~\ref{Theo:Main_Results1}]

Let $z\in \mathbb{C}_+$. Consider the following Cauchy-type integral
\begin{equation} \label{eq:Main_Results2}
 {\cal J}_n^\lambda(z):=\textstyle{\tsfrac 1{2\pi {\rm i}}}
 \displaystyle{\int_{-\infty}^\infty}
  \tsfrac{R_{n,s}^{\lambda}(t)}{t-z}{\mathrm d}t=\tsfrac 1{2\pi {\rm i}}
 \displaystyle{\int_{-\infty}^\infty}
  \tsfrac{A+Bt}{(t^2+\lambda^2)^{s+1} (t-z)\tau_n(t,{\bf \bar{a}})} {\mathrm d}t .
\end{equation}
The integrand as a function of the complex variable $t\in \mathbb{C}_+$ has  poles of  order 1 at the point $t_1=z$  and  of order $s+1$ at the point $t_2={\mathrm i}\lambda $, and for $|t|\to +\infty$,  it is   ${\mathcal O}(|t|^{-n-2s-2})$. Therefore, based on  Cauchy residue theorem, we have
\begin{equation} \label{eq:Main_Results3}
 {\cal J}_n^\lambda(z)=\Big(\mathop{\rm res}\limits_{t=t_1}+\mathop{\rm res}\limits_{t=t_2}\Big)
  \tsfrac{A+Bt}{(t^2+\lambda^2) ^{s+1}(t-z)\tau_n(t,{\bf \bar{a}}) },
\end{equation}
where
\begin{equation} \label{eq:Main_Results4}
  \mathop{\rm res}\limits_{t=t_1}\!
  \tsfrac{A+Bt}{(t^2+\lambda^2) ^{s+1}(t-z)\tau_n(t,{\bf \bar{a}})}
  =
 \tsfrac{A+Bz}{(z^2+\lambda^2)^{s+1}\tau_n(z,{\bf \bar{a}})}=R_{n,s}^{\lambda}(z),
\end{equation}
and
\begin{equation} \label{eq:Main_Results5}
     \mathop{\rm res}\limits_{t=t_2}
  \textstyle{\tsfrac{A+Bt}{(t^2+\lambda^2)^{s+1}(t-z)\tau_n(t,{\bf \bar{a}})}=\tsfrac 1 {s!}}
  \lim\limits_{t\to {\mathrm i}\lambda}
  \textstyle{\Big[\tsfrac{A+Bt}{(t+{\mathrm i}\lambda)^{s+1}(t-z)\tau_n(t,{\bf \bar{a}})} \Big]^{(s)}_t }.
\end{equation}
Since for any $k=0, 1,2,\ldots$,
 \[
   (A+Bt)^{(k)} =\left\{\begin{matrix}
  A+Bt, \hfill& k=0,\\
  B,\hfill& k=1,\\
  0,\hfill& k>1,
  \end{matrix}\right.
  \]
  and
 \[
 \textstyle{\left[\tsfrac 1{ t-z }\right]^{(k)}_t=\tsfrac {(-1)^k k!}{(t-z)^{k+1}},}
 \quad \textstyle{\left[\tsfrac 1{(t+{\mathrm i}\lambda)^{s+1}}\right]^{(k)}_t  =\tsfrac {(-1)^k (s+k)!}{s!(t+{\mathrm i}\lambda)^{s+k+1}} },
 \]
and  by Leibniz rule,
  \begin{eqnarray}\nonumber
    \Big[\tsfrac 1{\tau_n(t,{\bf \bar{a}})}\Big]^{(k)}_t =
     \sum_{|{\bf k}|=k}\tsfrac {k!}{k_1! k_2!\cdots k_n!}
     \prod_{j=1}^n \Big[\tsfrac {1}{t-\overline{a}_j}\Big]^{(k_j)}
      =
      \sum_{|{\bf k}|=k}
      \tsfrac {k!}{k_1!k_2!\cdots  k_n!}
      \prod_{j=1}^n
      \tsfrac {(-1)^{k_j} k_j!}{(t-\overline{a}_j)^{k_j+1}}
      =
      \tsfrac {(-1)^{k} k!\, \nu_k(t,{\bf \bar{a}})}{\tau_n(t,{\bf \bar{a}})},
  \end{eqnarray}
we get
 \begin{eqnarray}\nonumber
   \left[\tsfrac{A+Bt}{(t+{\mathrm i}\lambda)^{s+1}(t-z)\tau_n(t,{\bf \bar{a}})} \right]^{(s)}_t  
    &=&
     \sum_{|{\bf s}|=s}
      \tsfrac {(A+Bt)s!}{  s_1!s_2!s_3!}
      \tsfrac {(-1)^{s_1} {s_1}!}{(t-z)^{s_1+1}}
       \tsfrac {(-1)^{s_3} (s+s_3)!}{s!(t+{\mathrm i}\lambda)^{s+s_3+1}}
       \tsfrac {(-1)^{s_2} {s_2}! \,\nu_{s_2}(t,{\bf \bar{a}})}{\tau_n(t,{\bf \bar{a}})}
\\ \nonumber
      &&+ \sum_{|{\bf s}|=s-1}
  \textstyle{\tsfrac {B(s-1)!}{  s_1!s_2!s_3!}
     \tsfrac {(-1)^{s_1} s_1!}{(t-z)^{s_1+1}}
      \tsfrac {(-1)^{s_3} (s+s_3)!}{s!(t+{\mathrm i}\lambda)^{s+s_3+1}}
   \tsfrac {(-1)^{s_2} s_2!\, \nu_{s_2}(t,{\bf \bar{a}})}{\tau_n(t,{\bf \bar{a}})}}
   \\ \nonumber
    &=&
    \tsfrac {(-1)^{s} s!}{\tau_n(t,{\bf \bar{a}}) }\Bigg[
     \sum_{|{\bf s}|=s}
      \tsfrac { (A+Bt) {s+s_3\choose s} \nu_{s_2}(t,{\bf \bar{a}})}{(t-z)^{s_1+1}(t+{\mathrm i}\lambda)^{s+s_3+1}}  -
      \sum_{|{\bf s}|=s-1}\tsfrac { B{s+s_3\choose s} \nu_{s_2}(t,{\bf \bar{a}}) }{s(t-z)^{s_1+1}(t+{\mathrm i}\lambda)^{s+s_3+1}}
      \Bigg],
 \end{eqnarray}
 where ${\bf s}=(s_1,s_2,s_3)$.  Therefore,
  \begin{eqnarray}\nonumber
    \left[\tsfrac{A+Bt}{(t+{\mathrm i}\lambda)^{s+1}(t-z)\tau_n(t,{\bf \bar{a}})} \right]^{(s)}_t
   &=&
    \tsfrac {(-1)^{s}s!}{\tau_n(t,{\bf \bar{a}})}
    \Bigg[\sum_{s_1=0}^{s}\sum_{s_2=0}^{s-s_1}
      \tsfrac {(A+Bt){2s-s_1-s_2 \choose s} \nu_{s_2}(t,{\bf \bar{a}}) }
      {(t-z)^{s_1+1}(t+{\mathrm i}\lambda)^{2s-s_1-s_2+1}}
      \\ \nonumber
   &&
    -
     \sum_{s_1=0}^{s-1}\sum_{s_2=0}^{s-1-s_1}
     \tsfrac {B {2s-s_1-s_2 \choose s}(s-s_1-s_2)\nu_{s_2}(t,{\bf \bar{a}}) }
     {s(2s-s_1-s_2)  (t-z)^{s_1+1}(t+{\mathrm i}\lambda)^{2s-s_1-s_2}}\Bigg].
  \end{eqnarray}
Since the expression in the second sum vanishes  for $s_2=s-s_1$, as well as  for $s_1=s$ and $s_2=0$, we can rewrite the last formula as follows
 \begin{eqnarray}\nonumber
   \left[\tsfrac{A+Bt}{(t+{\mathrm i}\lambda)^{s+1}(t-z)\tau_n(t,{\bf \bar{a}})} \right]^{(s)}_t
    &=&
    \tsfrac {(-1)^{s}s!}{\tau_n(t,{\bf \bar{a}})}
    \Bigg[\sum_{s_1=0}^{s}\sum_{s_2=0}^{s-s_1}
    \tsfrac {(A+Bt){2s-s_1-s_2 \choose s}\nu_{s_2}(t,{\bf \bar{a}}) }
    {(t-z)^{s_1+1}(t+{\mathrm i}\lambda)^{2s-s_1-s_2+1}}
   \\ \nonumber
    && -
    \sum_{s_1=0}^{s}\sum_{s_2=0}^{s-s_1}
     \tsfrac {B {2s-s_1-s_2 \choose s}  (s-s_1-s_2)\nu_{s_2}(t,{\bf \bar{a}}) }
     {s(2s-s_1-s_2)  (t-z)^{s_1+1}(t+{\mathrm i}\lambda)^{2s-s_1-s_2}}\Bigg]
   \\
   \nonumber
   &=&
    \tsfrac {(-1)^{s}s!}{\tau_n(t,{\bf \bar{a}})}
   \sum_{s_1=0}^{s}\sum_{s_2=0}^{s-s_1}
   \tsfrac {{2s-s_1-s_2 \choose s} \nu_{s_2}(t,{\bf \bar{a}}) }
   {(t-z)^{s_1+1}(t+{\mathrm i}\lambda)^{2s-s_1-s_2+1}}
    \\ \nonumber
    &&
     \times
     \Big(A+Bt - \tsfrac{B(t+{\mathrm i}\lambda)(s-s_1-s_2)}{s(2s-s_1-s_2)}\Big).
  \end{eqnarray}
Setting $l:=s_1$ and $j:=s-s_1-s_2$, we finally  get
\begin{equation} \nonumber
    \left[\tsfrac{A+Bt}{(t+{\mathrm i}\lambda)^{s+1}(t-z)\tau_n(t,{\bf \bar{a}})} \right]^{(s)}_t
    \!\!= \tsfrac {(-1)^{s}}{\tau_n(t,{\bf \bar{a}})}
    \sum_{l=0}^{s}\sum_{j=0}^{s-l}
   \tsfrac {s!\,{s+j \choose s}\,\nu_{s-l-j}(t,{\bf \bar{a}}) }
   {(t-z)^{l+1}(t+{\mathrm i}\lambda)^{s+j+1}}
   \Big(A+Bt - \tsfrac{B(t+{\mathrm i}\lambda)j}{s(s+j)}\Big).
\end{equation}
Combining the last relation and \eqref{eq:Main_Results5}, we see that
\begin{equation} \nonumber
  \mathop{\rm res}\limits_{t=t_2}
  \textstyle{
 \tsfrac{A+Bt}{(t^2+\lambda^2)^{s+1}(t-z)\tau_n(t,{\bf \bar{a}})}} = \textstyle{\tsfrac {(-1)^{s} }{\tau_n({\mathrm i}\lambda,{\bf \bar{a}})}}
   \displaystyle{\sum_{l=0}^{s}\sum_{j=0}^{s-l} }
   \textstyle{
  \tsfrac {{s+j \choose s}\nu_{s-l-j}({\mathrm i}\lambda,{\bf \bar{a}}) }{({\mathrm i}\lambda-z)^{l+1}(2{\mathrm i}\lambda)^{s+j+1}}
\Big( A+B{\mathrm i}\lambda  - \tsfrac{2B{\mathrm i}\lambda j}{s(s+j)}\Big)}.
\end{equation}
{
Using the identity ${\mathrm i}^{2s+2}=(-1)^{s+1}$ and taking into account \eqref{eq:Main_Results1_0}, we obtain
  \begin{eqnarray}\nonumber
   \mathop{\rm res}\limits_{t=t_2}
   \tsfrac{A+Bt}{(t^2+\lambda^2)^{s+1}(t-z)\tau_n(t,{\bf \bar{a}})} \!\!
   &=&
    \!\!\sum_{l=0}^{s}\sum_{j=0}^{s-l}
     \tsfrac {{s+j \choose s}(2{\mathrm i}\lambda)^{s-j}\nu_{s-l-j}({\mathrm i}\lambda,{\bf \bar{a}}) }
     {(2 \lambda)^{2s+1}\tau_n({\mathrm i}\lambda,{\bf \bar{a}})({\mathrm i}\lambda-z)^{l+1}}
     \Big( \tsfrac{B\lambda(s(s+j)-2 j)}{s(s+j)} -{\mathrm i}A  \Big)
     \\ \label{eq:Main_Results6}
   &=&
     \sum_{l=0}^{s}\tsfrac {D_l}{({\mathrm i}\lambda-z)^{l+1}}.
  \end{eqnarray}
Substituting the relations  \eqref{eq:Main_Results4} and \eqref{eq:Main_Results6} into \eqref{eq:Main_Results3}, we see that
 \begin{eqnarray}\label{eq:Main_Results7}
   {\cal J}_n^\lambda(z)&=& R_{n,s}^{\lambda}(z)+ \sum_{l=0}^{s}\tsfrac {D_l}{({\mathrm i}\lambda-z)^{l+1}}.
  \end{eqnarray}}

On the other hand, taking $\zeta=t\in {\mathbb R}$ in the identity \eqref{eq:Preliminaries1} and using the representation
\eqref{eq:Main_Results2}, we get
 \begin{eqnarray}\nonumber
   {\cal J}_n^\lambda(z)&=&
    \sum_{k=1}^{n}
    \tsfrac {\Phi_k(z)}{\pi}
    \int_{-\infty}^\infty
    R_{n,s}^{\lambda}(t)\overline{\Phi_k(t)}
    {\mathrm d}t+
    \tsfrac{B_{n+1}(z)}{2\pi {\mathrm i}}
     \int_{-\infty}^\infty
     \tsfrac{\overline{B_{n+1}(t)}R_{n,s}^{\lambda}(t)}{t-z}{\mathrm d}t
     \\ \label{eq:Main_Results8}
   &=& S_n(R_{n,s}^{\lambda})(z)+B_{n+1}(z)U_n^\lambda(z),
\end{eqnarray}
where
 \[
   U_n^\lambda(z)=\tsfrac{1}{2\pi {\mathrm i}}\int_{-\infty}^\infty
    \tsfrac{\overline{B_{n+1}(t)}R_{n,s}^{\lambda}(t)}{t-z}{\mathrm d}t
   = \tsfrac {\prod\limits_{k=1}^n\overline{\chi}_k}{2\pi {\rm i}}
    \int_{-\infty}^\infty
    \overline{\left(\tsfrac {\tau_n(t,{\bf a})}{\tau_n(t,{\bf \bar{a}})}\right)}
    \tsfrac{A+Bt}{(t^2+\lambda^2)^{s+1}\tau_n(t,{\bf \bar{a}})}
    \tsfrac {{\mathrm d}t}{t-z}.
    \]
For $t\in {\mathbb R}$
 \[
  \overline{\Big(\tsfrac {\tau_n(t,{\bf a})}{\tau_n(t,{\bf \bar{a}})}\Big)}=
  \tsfrac {\prod\limits_{k=1}^n\overline{(t-a_k)}}
  {\prod\limits_{k=1}^n\overline{(t-\overline{a}_k)}}
  =\tsfrac {\prod\limits_{k=1}^n(t-\overline{a}_k)}
  {\prod\limits_{k=1}^n (t- a_k)}
  =\tsfrac {\tau_n(t,{\bf \bar{a}})}{\tau_n(t,{\bf a})}.
 \]
Therefore,
 \[
   U_n^\lambda(z) =
   \tsfrac {\prod\limits_{k=1}^n\overline{\chi}_k}{2\pi {\rm i}}
    \int_{-\infty}^\infty
    \tsfrac{A+Bt}{(t^2+\lambda^2)^{s+1} \tau_n(t,{\bf a})}
    \tsfrac {{\mathrm d}t}{t-z}.
 \]
The integrand, as a function of the complex variable $t$ in the lower half plane, has a pole of order $s+1$  at the point $t=-{\mathrm i}\lambda$, and for $|t| \to +\infty$ it is   ${\mathcal O}(|t|^{-n-2s-2})$. Thus, by  Cauchy residue theorem
 \[
 U_n^\lambda(z) =-\prod\limits_{k=1}^n\overline{\chi}_k\mathop{\rm res}\limits_{t=-{\mathrm i}\lambda}
  \tsfrac{A+Bt}{(t^2+\lambda^2)^{s+1}(t-z)\tau_n(t,{\bf a})}.
 \]
Consequently,
 \[
  \mathop{\rm res}\limits_{t=-{\mathrm i}\lambda}
   \tsfrac{A+Bt}{(t^2+\lambda^2)^{s+1}(t-z)\tau_n(t,{\bf a})}
  =\tsfrac 1 {s!}
  \lim\limits_{t\to -{\mathrm i}\lambda}
  \left[  \tsfrac{ A+Bt }{(t-{\mathrm i}\lambda)^{s+1}(t-z)\tau_n(t,{\bf a})} \right]^{(s)}_t .
 \]
  This similarly yields
\begin{eqnarray}\nonumber
   \mathop{\rm res}\limits_{t=-{\mathrm i}\lambda} \tsfrac{A+Bt}{(t^2+\lambda^2)^{s+1}(t-z)\tau_n(t,{\bf a})} 
   &=&
     \Bigg[\sum_{l=0}^{s}  \sum_{j=0}^{s-l}
       \tsfrac {{s+j \choose s}(-2{\mathrm i}\lambda)^{s-j} {\nu}_{s-l-j}(-{\mathrm i}\lambda,{\bf a}) }
       {(2\lambda)^{2s+1}\tau_n(-{\mathrm i}\lambda,{\bf a})(-{\mathrm i}\lambda-z)^{l+1}}
       \Big(\tsfrac {B\lambda(s(s+j)-2j)}{s(s+j)} + {\mathrm i}A \Big)\Bigg]
       \\ \nonumber
   &=&
      \sum_{l=0}^{s}   \tsfrac {\overline{D}_l}{(-{\mathrm i}\lambda-z)^{l+1}}.
\end{eqnarray}
Hence,
\begin{eqnarray}\nonumber
   U_n^\lambda(z)=
    \prod\limits_{k=1}^n\overline{\chi}_k \sum_{l=0}^{s}   \tsfrac {\overline{D}_l}{(-{\mathrm i}\lambda-z)^{l+1}}.
\end{eqnarray}
Combining this  relation and  \eqref{eq:Main_Results8}, we obtain
\begin{eqnarray}\label{eq:Main_Results9}
    {\cal J}_n^\lambda(z)=S_n(R_{n,s}^{\lambda})(z)+
   \tsfrac{\tau_n(z,{\bf a})}{\tau_n(z,{\bf \bar{a}})}
   \sum_{l=0}^{s}   \tsfrac {\overline{D}_l}{(-{\mathrm i}\lambda-z)^{l+1}} .
\end{eqnarray}
Comparing  \eqref{eq:Main_Results7} and \eqref{eq:Main_Results9}, we get \eqref{eq:Main_Results1}.
\end{proof}

\begin{remark} \label{Rem:Main_Results1}
Note that by the definition of the system $\{\Phi_j\}_{j=1}^\infty$, the families of rational functions of the form $\Big\{\sum\limits_{j=1}^n b_j \Phi_j(z)\Big\}$, where $b_j$ are any complex numbers,  and $\Big\{\tsfrac{p_{n-1}(z)}{\tau_n (z,{\bf \bar{a}})} \Big\}$, where $p_{n-1}$ is an algebraic polynomial of degree at most  $n-1$, coincide.
 \end{remark}

\color{black}
For a fixed value of $\lambda >0$, we define the algebraic polynomial $p_{n-1}^{(\lambda,s)}(z)$ of the degree $n-1$ with respect to the variable $z$ by the equality
\begin{equation}\label{eq:Main_Results10}
    p_{n-1}^{(\lambda,s)}(z):=p_{n-1}^{(\lambda,s)}(z,{\bf \bar{a}})=\tau_n (z,{\bf \bar{a}}) S_n(R_{n,s}^{\lambda})(z), \quad n=1,2,\ldots~.
\end{equation}

\begin{corollary} \label{Cor:Main_Results1}
  Let  $A$ and $B$ be any complex numbers, $s$ be a positive integer and $\lambda>0.$ Then for any  $z\in \mathbb{C}_+$ and   $n\in \mathbb{N}$, the following identity holds:
  {
   \begin{eqnarray}\label{eq:Main_Results10_01}
    \tsfrac{A+Bz}{(z^2+\lambda^2)^{s+1}}-p_{n-1}^{(\lambda,s)}(z)=
    \sum_{l=0}^{s}\Big(\tau_n(z,{\bf \bar{a}}) \tsfrac { D_l}{({\mathrm i}\lambda-z)^{l+1}}+
    \tau_n(z,{\bf a})\tsfrac{\overline{D}_l}
     {(-{\mathrm i}\lambda-z)^{l+1}}\Big),
\end{eqnarray}
where the numbers $D_l$, $l=0,1,\ldots,s$, are given by \eqref{eq:Main_Results1_0}.
  }

 \end{corollary}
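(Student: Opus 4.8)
The plan is to deduce the Corollary directly from Theorem~\ref{Theo:Main_Results1} by multiplying the identity \eqref{eq:Main_Results1} through by $\tau_n(z,{\bf \bar a})$. First I would record the trivial observation that, by the very definitions of $R_{n,s}^{\lambda}$ and ${\mathcal K}_{\lambda,s}$,
\[
  \tau_n(z,{\bf \bar a})\,R_{n,s}^{\lambda}(z)=\tsfrac{A+Bz}{(z^2+\lambda^2)^{s+1}}={\mathcal K}_{\lambda,s}(z),
  \qquad z\in{\mathbb C}_+,\ z\neq \pm{\mathrm i}\lambda .
\]

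Next I would note that the function $p_{n-1}^{(\lambda,s)}$ introduced in \eqref{eq:Main_Results10} is indeed an algebraic polynomial of degree at most $n-1$: the partial sum $S_n(R_{n,s}^{\lambda})=\sum_{j=1}^n\widehat R_{n,s}^{\lambda}(j)\Phi_j$ is a linear combination of $\Phi_1,\dots,\Phi_n$, so by Remark~\ref{Rem:Main_Results1} it has the form $p_{n-1}(z)/\tau_n(z,{\bf \bar a})$ with $\deg p_{n-1}\le n-1$, and multiplying back by $\tau_n(z,{\bf \bar a})$ returns precisely that polynomial. This is the one point in the argument that actually uses the Takenaka--Malmquist structure of $\{\Phi_j\}$, and I would not skip it.

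With these two remarks in hand, I multiply both sides of \eqref{eq:Main_Results1} by $\tau_n(z,{\bf \bar a})$. On the left the first term becomes ${\mathcal K}_{\lambda,s}(z)$ by the displayed identity above, while $\tau_n(z,{\bf \bar a})S_n(R_{n,s}^{\lambda})(z)=p_{n-1}^{(\lambda,s)}(z)$ by \eqref{eq:Main_Results10}; hence the left-hand side is ${\mathcal K}_{\lambda,s}(z)-p_{n-1}^{(\lambda,s)}(z)$. On the right the summand $D_l/({\mathrm i}\lambda-z)^{l+1}$ simply acquires the factor $\tau_n(z,{\bf \bar a})$, and in the summand $\tsfrac{\tau_n(z,{\bf a})}{\tau_n(z,{\bf \bar a})}\cdot\tsfrac{\overline D_l}{(-{\mathrm i}\lambda-z)^{l+1}}$ the factor $\tau_n(z,{\bf \bar a})$ cancels the denominator, leaving $\tau_n(z,{\bf a})\,\overline D_l/(-{\mathrm i}\lambda-z)^{l+1}$. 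Collecting the terms over $l=0,1,\dots,s$ gives exactly \eqref{eq:Main_Results10_01}, with the $D_l$ still given by \eqref{eq:Main_Results1_0}.

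Since the whole derivation is a one-line algebraic manipulation of an already proven identity, there is no genuine obstacle; the only thing requiring care is the (routine) verification that $p_{n-1}^{(\lambda,s)}$ is an honest polynomial of degree at most $n-1$, which is handled by Remark~\ref{Rem:Main_Results1}.
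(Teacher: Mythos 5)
Your argument is correct and coincides with the paper's own proof, which likewise obtains \eqref{eq:Main_Results10_01} by substituting $S_n(R_{n,s}^{\lambda})(z)=p_{n-1}^{(\lambda,s)}(z)/\tau_n(z,{\bf \bar a})$ from \eqref{eq:Main_Results10} into the identity \eqref{eq:Main_Results1} and clearing the factor $\tau_n(z,{\bf \bar a})$. Your additional remark that $p_{n-1}^{(\lambda,s)}$ is genuinely a polynomial of degree at most $n-1$ (via Remark~\ref{Rem:Main_Results1}) is a welcome, if routine, piece of bookkeeping the paper leaves implicit.
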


Indeed, the equality \eqref{eq:Main_Results10_01} can be easily obtained by expressing the value of $S_n(R_n)(z)$ from equality (\ref{eq:Main_Results10}) and substituting it into identity \eqref{eq:Main_Results1} of Theorem~\ref{Theo:Main_Results1}.

{
Let ${\bf a}:=\{a_k\}_{k=1}^{\infty}$ be a fixed system of points in ${\mathbb C}_+$, and let
 \[
  \rho_n(z)=\rho_n(z,{\bf a})=\rho_0 \tau_n(z,{\bf a})=\rho_0 \prod_{k=1}^{n}(z-a_k),
 \]
where $\rho_0\not=0$ is a constant.

Denote by $P_{n-1}$ the set of all algebraic polynomials with complex coefficients of degree at most  $n-1$   and consider  the quantity
 \begin{equation}\label{eq:Main_Results11}
    \mathcal{E}_n ({\mathcal K}_{\lambda,s})_{2,\rho_n}:=\inf\limits_{p\in P_{n-1}} \Big(\int_{-\infty}^{\infty}
     \left| \tsfrac{A+Bt}{(t^2+\lambda^2)^{s+1}}-
    p(t) \right|^2 \tsfrac{\mathrm{d}t}{|\rho_n(t)|^2}\Big)^{1/2}
\end{equation}
of best weighted approximation of the kernel ${\mathcal K}_{\lambda,s}$ of the form \eqref{eq:Intro1} by all possible  polynomials  from the set $P_{n-1}$ in the mean square metric
with the weight $\tsfrac{1}{|\rho_n(t)|^2}$.

\begin{theorem}\label{Theo:Main_Results2}
Let $A,B$ be  any fixed real numbers, $\lambda>0$ and $s\in {\mathbb N}$.   Then for any $n\in {\mathbb N}$
\begin{equation} \label{eq:Main_Results12}
    \mathcal{E}_n^2 ({\mathcal K}_{\lambda,s})_{2,\rho_n}=\tsfrac{4 \pi}{(2 \lambda)^{2s+3}
    |\rho_n(-\mathrm{i}\lambda)|^2}\sum_{k=0}^{s}\sum_{l=0}^{s}   \textstyle{{l+k \choose  l   }} G_k\overline{G}_l,
\end{equation}
where
\begin{equation} \label{eq:Main_Results13}
  G_k=G_k(\lambda,{\bf a})= \sum_{j=0}^{s-k}
  \textstyle{
  \tsfrac {{s+j \choose s}{\mathrm i}^{s-k-j}\nu_{s-k-j}({\mathrm i}\lambda,{\bf \bar{a}}) }{(2\lambda)^{k+j}}
  \Big( \tsfrac {B\lambda(s(s+j)-2j)}{s(s+j)}- {\mathrm i}A \Big).}
\end{equation}
 The infimum on the right-hand side of \eqref{eq:Main_Results11} is attained for the polynomial $p_{n-1}^{(\lambda,s)} $ of the form \eqref{eq:Main_Results10}.

\end{theorem}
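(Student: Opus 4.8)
The plan is to turn the weighted $L_2$ problem \eqref{eq:Main_Results11} on $\mathbb R$ into an ordinary $L_2(\mathbb R)$ best-approximation problem for the rational function $R_{n,s}^{\lambda}$, to use the orthonormality of $\{\Phi_j\}$ to identify the best approximant with $S_n(R_{n,s}^{\lambda})$, and then to evaluate the error integral by means of the deviation formula of Theorem~\ref{Theo:Main_Results1}. First I would note that for $t\in\mathbb R$ one has $\overline{t-a_k}=t-\overline{a}_k$, so $|\rho_n(t)|^{2}=|\rho_0|^{2}|\tau_n(t,{\bf a})|^{2}=|\rho_0|^{2}|\tau_n(t,{\bf \bar{a}})|^{2}$; dividing the integrand in \eqref{eq:Main_Results11} by $|\tau_n(t,{\bf \bar{a}})|^{2}$ turns the problem into
\[
  \mathcal{E}_n^{2}({\mathcal K}_{\lambda,s})_{2,\rho_n}
  =\tsfrac{1}{|\rho_0|^{2}}\inf_{p\in P_{n-1}}\int_{-\infty}^{\infty}
   \Big|R_{n,s}^{\lambda}(t)-\tsfrac{p(t)}{\tau_n(t,{\bf \bar{a}})}\Big|^{2}\,{\mathrm d}t .
\]
By Remark~\ref{Rem:Main_Results1}, as $p$ ranges over $P_{n-1}$ the functions $p/\tau_n(\cdot,{\bf \bar{a}})$ range over the $n$-dimensional space spanned by $\Phi_1,\dots,\Phi_n$; since this system is orthonormal on $\mathbb R$ with respect to $\langle f,g\rangle=\tsfrac{1}{\pi}\int_{-\infty}^{\infty} f\overline g\,{\mathrm d}t$, and $R_{n,s}^{\lambda}|_{\mathbb R}\in L_2(\mathbb R)$ (it is continuous, has no real poles, and is $\mathcal O(|t|^{-2})$ at infinity), the minimizer is the orthogonal projection $S_n(R_{n,s}^{\lambda})$. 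This already yields the last assertion of the theorem — the infimum is attained at $p=p_{n-1}^{(\lambda,s)}$ of \eqref{eq:Main_Results10} — and reduces everything to computing $\tsfrac{1}{|\rho_0|^{2}}\int_{\mathbb R}\big|R_{n,s}^{\lambda}-S_n(R_{n,s}^{\lambda})\big|^{2}\,{\mathrm d}t$.

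Next I would substitute \eqref{eq:Main_Results1}. Writing $F(t):=\sum_{l=0}^{s}D_l({\mathrm i}\lambda-t)^{-(l+1)}$ and using that $A,B$ are real (so the $\overline{D_l}$ in \eqref{eq:Main_Results1} is the genuine complex conjugate), the deviation on $\mathbb R$ equals $F(t)+\tsfrac{\tau_n(t,{\bf a})}{\tau_n(t,{\bf \bar{a}})}\,\overline{F(t)}$, with $|\tau_n(t,{\bf a})/\tau_n(t,{\bf \bar{a}})|=1$ on $\mathbb R$. Expanding the modulus squared, the diagonal part contributes $2\int_{\mathbb R}|F|^{2}$, while the cross term equals $\int_{\mathbb R}\tsfrac{\tau_n(t,{\bf \bar{a}})}{\tau_n(t,{\bf a})}F(t)^{2}\,{\mathrm d}t+\int_{\mathbb R}\tsfrac{\tau_n(t,{\bf a})}{\tau_n(t,{\bf \bar{a}})}\,\overline{F(t)}^{2}\,{\mathrm d}t$. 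In the first integrand every singularity — the pole of $F$ at ${\mathrm i}\lambda$ and the poles of $1/\tau_n(\cdot,{\bf a})$ at the $a_k$ — lies in $\mathbb C_+$, so it is holomorphic in $\mathbb C_-$ and $\mathcal O(|t|^{-2})$; closing the contour in the lower half-plane shows it vanishes, and the second integrand is holomorphic in $\mathbb C_+$ and vanishes for the same reason. Hence
\[
  \int_{-\infty}^{\infty}\big|R_{n,s}^{\lambda}-S_n(R_{n,s}^{\lambda})\big|^{2}\,{\mathrm d}t
  =2\sum_{k=0}^{s}\sum_{l=0}^{s}D_k\overline{D_l}\int_{-\infty}^{\infty}
  \tsfrac{{\mathrm d}t}{({\mathrm i}\lambda-t)^{k+1}(-{\mathrm i}\lambda-t)^{l+1}} .
\]

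It then remains to perform two elementary computations and assemble. The first is the residue evaluation, obtained by closing in $\mathbb C_+$ and differentiating $(-{\mathrm i}\lambda-t)^{-(l+1)}$ $k$ times at $t={\mathrm i}\lambda$:
\[
  \int_{-\infty}^{\infty}\tsfrac{{\mathrm d}t}{({\mathrm i}\lambda-t)^{k+1}(-{\mathrm i}\lambda-t)^{l+1}}
  =2\pi{\mathrm i}\,(-1)^{l}{k+l\choose l}\,(2{\mathrm i}\lambda)^{-(k+l+1)} .
\]
The second is the comparison of \eqref{eq:Main_Results1_0} with \eqref{eq:Main_Results13}: both are sums over the same index $j$ with the same $j$-dependent factor ${s+j\choose s}\nu_{s-l-j}({\mathrm i}\lambda,{\bf \bar{a}})\big(\tsfrac{B\lambda(s(s+j)-2j)}{s(s+j)}-{\mathrm i}A\big)$, so the $j$-independent quotient of prefactors gives $D_l=\tsfrac{{\mathrm i}^{l}}{(2\lambda)^{s+1-l}\tau_n({\mathrm i}\lambda,{\bf \bar{a}})}\,G_l$; together with $|\tau_n({\mathrm i}\lambda,{\bf \bar{a}})|^{2}=\prod_{k=1}^{n}|{\mathrm i}\lambda-\overline{a}_k|^{2}=\prod_{k=1}^{n}[\alpha_k^{2}+(\beta_k+\lambda)^{2}]=\mu_n(\lambda,{\bf a})$ this gives $D_k\overline{D_l}=\tsfrac{{\mathrm i}^{k}(-{\mathrm i})^{l}}{(2\lambda)^{2s+2-k-l}\,\mu_n(\lambda,{\bf a})}\,G_k\overline{G_l}$. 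Plugging both formulas into the last display and dividing by $|\rho_0|^{2}$, the powers of $2\lambda$ collect into $(2\lambda)^{-(2s+3)}$, the numerical constant becomes $2\cdot2\pi=4\pi$, and the surviving powers of ${\mathrm i}$ and $-1$ collapse to $1$; this is precisely \eqref{eq:Main_Results12}.

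The only step that is more than bookkeeping is the vanishing of the cross term: one has to notice that, although each of its two integrands is a product of rational functions with poles on both sides of $\mathbb R$, after using $\overline{F(t)}=\sum_l\overline{D_l}(-{\mathrm i}\lambda-t)^{-(l+1)}$ it is in fact holomorphic in one fixed half-plane and decays like $|t|^{-2}$, so the contour can be closed there. Everything else — the residue integral, the identification $D_l\leftrightarrow G_l$, and the final simplification of the powers of $2{\mathrm i}\lambda$, the binomial coefficients and the signs — is routine.
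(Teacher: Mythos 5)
Your proposal is correct and follows essentially the same route as the paper: reduce the weighted problem to an unweighted $L_2(\mathbb R)$ projection onto $\mathrm{span}\{\Phi_1,\dots,\Phi_n\}$, substitute the deviation formula \eqref{eq:Main_Results1}, kill the cross terms by closing contours in the appropriate half-planes, evaluate the remaining integrals by the residue formula, and convert $D_l$ into $G_l$. Your residue identity even corrects a small typo in the paper's \eqref{eq:Main_Results19} (the exponent should be $j+k$, not $j+l$), and all the bookkeeping checks out.
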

}

\begin{remark}\label{Remark_1}
 Analyzing relation \eqref{eq:Main_Results12}, we can see that in particular, the equality
  \[
  \lim\limits_{n\to \infty} \mathcal{E}_n ({\mathcal K}_{\lambda,s})_{2,\rho_n}=0
  \]
 is satisfied when $\lambda> 1$ or when $0<\lambda<1$ and  the numbers $a_k$ belong to the set ${\mathbb C}_+\setminus \{z\in {\mathbb C}: |z|<1\}$.

\end{remark}

\begin{remark}\label{Remark_2}

For a given sequence  ${\bf a}=\{a_j\}_{j=1}^{\infty}$, $a_j=\alpha_j+{\mathrm i}\beta_j$, $\beta_j>0$, and any $\lambda>0$, consider the sequence ${\bf z}={\bf z}({\bf a},\lambda)=\{z_j\}_{j=1}^{\infty}$
from the right half-plane such that
 \[
 z_j:=r_j(\cos \varphi_j+{\mathrm i}\sin \varphi_j)=\tsfrac {(\beta_j+\lambda)+{\mathrm i}\alpha_j}{\alpha_j^2+(\beta_j+\lambda)^2}, \quad r_j=|z_j|.
 \]
Using \eqref{eq:Preliminaries3}, for any positive integer $k$,
the products ${\mathrm i}^{k}\nu_{k}({\mathrm i}\lambda,{\bf \bar{a}})$  in the definition of the values
$G_k(\lambda,{\bf a})$  can be expressed as follows:
\begin{eqnarray}\nonumber
   {\mathrm i}^k \nu_k({\mathrm i}\lambda,{\bf \bar{a}})&=& {\mathrm i}^k \sum_{|{\bf k}|=k}
  \prod_{j=1}^n  \tsfrac {1}{(-\alpha_j+{\mathrm i}(\beta_j+\lambda))^{k_j}}
    \\ \nonumber
    &=&  \sum_{|{\bf k}|=k}
  \prod_{j=1}^n  \Big( \tsfrac {\beta_j+\lambda-{\mathrm i}\alpha_j}{\alpha_j^2+(\beta_j+\lambda)^2}\Big)^{k_j}
   \\\nonumber
   &=&
     \sum_{|{\bf k}|=k}
  \prod_{j=1}^n  \overline{z}_j^{k_j}
  \\\nonumber
   &=&
    \sum_{|{\bf k}|=k}
  {\bf r}^{\bf k} \Big(\cos ({\bf k}\cdot \mbox{\boldmath${\bf \varphi}$})-{\mathrm i}\sin ({\bf k}\cdot \mbox{\boldmath${\bf \varphi}$})\Big),
  \end{eqnarray}
   where ${\bf k}\cdot \mbox{\boldmath${\bf \varphi}$}=k_1\varphi_1+\ldots+k_n\varphi_n\ \  {\rm and}\ \ {\bf r}^{\bf k}=r_1^{k_1}\cdot \ldots\cdot r_n^{k_n}$.

   Similarly, for the product
$(-{\mathrm i})^k\overline{\nu}_k({\mathrm i}\lambda,\bar{\bf  {a}})$ we have
   \[
  (-{\mathrm i})^k\overline{\nu}_k({\mathrm i}\lambda,\bar{\bf  {a}})=  \sum_{|{\bf k}|=k}
  \prod_{j=1}^n   z_j^{k_j}= \sum_{|{\bf k}|=k}
  {\bf r}^{\bf k} \Big(\cos ({\bf k}\cdot \mbox{\boldmath${\bf \varphi}$})+{\mathrm i}\sin ({\bf k}\cdot \mbox{\boldmath${\bf \varphi}$})\Big).
 \]

\end{remark}

{

\begin{remark}\label{Remark_3}

Note that the sum on right-hand side of \eqref{eq:Main_Results12} can be represented in the form
 \[
 \sum_{k=0}^{s}\sum_{l=0}^{s}   \textstyle{{l+k \choose  l   }} G_k\overline{G}_l=
 {\displaystyle\int_0^\infty} {\rm e}^{-t}
 {\displaystyle\sum_{k=0}^{s}\sum_{l=0}^{s}}\tsfrac{G_k t^k}{k!} \tsfrac{\overline{G}_l t^l}{l!}{\rm d}t
 =
 {\displaystyle\int_0^\infty} {\rm e}^{-t} \Big|
 {\displaystyle\sum_{k=0}^{s}}\tsfrac{G_k t^k}{k!} \Big|^2{\rm d}t\ge 0 .
 \]

\end{remark}

}


\begin{proof}[Proof of Theorem~\ref{Theo:Main_Results2}] { By virtue of Remark \ref{Rem:Main_Results1}, each polynomial $p\in P_{n-1}$ can be represented as
\[
    p(t)=\tau_n(t,{\bf \bar{a}}) \sum_{k=1}^{n} b_k \Phi_k(t),
\]
with certain coefficients $\{b_k\}_{k=1}^n$.  Thus, taking into account the definition of the function $R_{n,s}^{\lambda}$, we can assert that  \begin{equation}\label{eq:Main_Results14}
  \int_{-\infty}^{\infty}
     \left| \tsfrac{A+Bt}{(t^2+\lambda^2)^{s+1}}-
    p(t) \right|^2 \tsfrac{\mathrm{d}t}{|\rho_n(t)|^2}=\tsfrac{1}{|\rho_0|^{2}} \int_{-\infty}^{\infty}
    \Big|R_{n,s}^{\lambda}(t)- \sum_{k=1}^{n} b_k \Phi_k(t)\Big|^2 \mathrm{d}t.
\end{equation}
Moreover, due to the well-known minimum property of the partial sums of Fourier series, for any quasi-polynomial  $\sum\limits_{k=1}^{n} b_k \Phi_k(t)$ with respect to the system  $\{\Phi_k\}_{k=1}^\infty$, we have
 \begin{equation}\label{eq:Main_Results15}
    \int_{-\infty}^{\infty}
    \Big|R_{n,s}^{\lambda}(t)- \sum_{k=1}^{n} b_k \Phi_k(t)\Big|^2 \mathrm{d}t\ge
     \int_{-\infty}^{\infty} |R_{n,s}^{\lambda}(t)-S_n(R_{n,s}^\lambda)(t)|^2~\mathrm{d}t=:I
\end{equation}
and the infimum on the right-hand side of \eqref{eq:Main_Results11} is realized by the polynomial $p_{n-1}^{(\lambda,s)}$ of the form \eqref{eq:Main_Results10} for which we have
 \[
  \int_{-\infty}^{\infty}
     \left| \tsfrac{A+Bt}{(t^2+\lambda^2)^{s+1}}-
     p_{n-1}^{(\lambda,s)} (t)  \right|^2 \tsfrac{\mathrm{d}t}{|\rho_n(t)|^2}=\tsfrac {1}{|\rho_0|^2 } \int_{-\infty}^{\infty}
    \Big|R_{n,s}^{\lambda}(t)- S_n(R_{n,s}^\lambda)(t)\Big|^2 \mathrm{d}t=\tsfrac {I}{|\rho_0|^2 }.
\]
Therefore, to find the value $ \mathcal{E}_n ({\mathcal K}_{\lambda,s})_{2,\rho_n}$ it is sufficient
 to calculate the integral $I$ on the right-hand side of the relation  \eqref{eq:Main_Results15}.

 Denote
 \begin{equation}\label{eq:Main_Results17}
 W(t):= \sum_{l=0}^{s}   \tsfrac {D_l}{({\mathrm i}\lambda-t)^{l+1}}\quad {\rm and}\quad
 Q(t):=
 \tsfrac{\tau_n(t,{\bf a})}{\tau_n(t,{\bf \bar{a}})}  \overline{W}(t).
\end{equation}
 Then based on \eqref{eq:Main_Results1} we get
\begin{eqnarray}\nonumber
   I=    \int_{-\infty}^{\infty} |W(t)+Q(t) |^2  \mathrm{d}t=\int_{-\infty}^{\infty} \Big|W(t)\Big|^2 \mathrm{d}t+\int_{-\infty}^{\infty} \Big|Q(t)\Big|^2 \mathrm{d}t+2\mathrm{Re}\int_{-\infty}^{\infty}W(t)\overline{Q(t)}\mathrm{d}t.\nonumber
  \end{eqnarray}

For any  $t\in \mathbb{R}$, we have $\Big| \tsfrac{\tau_n(t,{\bf a})}{\tau_n(t,{\bf \bar{a}})}\Big|=1$.
Therefore, taking into account the notation (\ref{eq:Main_Results17}), we conclude that
 \[
  \int_{-\infty}^{\infty} |W(t)|^2\mathrm{d}t=
    \int_{-\infty}^{\infty} |Q(t)|^2\mathrm{d}t.
 \]
The functions
 \[
  \tsfrac {1}
  {({\mathrm i}\lambda-t)^{l}}\quad {\rm and }\quad \tsfrac {1}
  {(-{\mathrm i}\lambda-t)^{l}},\quad l=1,2,\ldots
 \]
 as  functions of the complex variable $t$, are regular in the domains $\overline{\mathbb{C}}_-$  and $\overline{\mathbb{C}}_+$, respectively, and for $|t| \to +\infty$ it is    ${\mathcal O}(|t|^{-l})$. Therefore, based on the Cauchy  theorem
 \[
    \int_{-\infty}^{\infty} Q (t) \overline{W  (t)}~\mathrm{d}t=0.
 \]

 }
Therefore
 \begin{equation}\label{eq:Main_Results18}
 I=2\int_{-\infty}^{\infty} |W(t)|^2\mathrm{d}t.
 \end{equation}

Further, note that for any nonnegative integers $k$ and $j$,
\begin{equation} \label{eq:Main_Results19}
  \int_{-\infty}^{\infty}
  \tsfrac {{\mathrm d}t}  {({\mathrm i}\lambda-t)^{k+1}(-{\mathrm i}\lambda-t)^{j+1}}
  =\tsfrac{\pi}{\lambda}\cdot \tsfrac{ (-1)^{j} {j+k \choose  j   }}{  (2 {\mathrm i} \lambda)^{j+k}} .
\end{equation}
Indeed, the function
 \[
  \tsfrac {1}{({\mathrm i}\lambda-t)^{k+1}(-{\mathrm i}\lambda-t)^{j+1}}=
  \tsfrac {(-1)^{k+j}}
  {(t-{\mathrm i}\lambda)^{k+1}(t+{\mathrm i}\lambda)^{j+1}},\quad k,j=0,1,\ldots
 \]
as a function  of the complex variable $t$ has a pole of the order $k+1$ at the point $t={\mathrm i}\lambda$
in the domain $\mathbb{C}_+$, and for $|t|\to +\infty$, it has the order   ${\mathcal O}(|t|^{-k-j-2})$. Therefore, based on the Cauchy residue theorem, we get \eqref{eq:Main_Results19}.

Assume that $a_j=\alpha_j+{\mathrm i}\beta_j$, $\beta_j>0$, $j=1,2,\ldots$. Then, taking into account the accepted notations
we see that
  \begin{equation} \label{eq:Main_Results20}
    |\tau_n({\mathrm{i}\lambda},{\bf \bar{a}})|^2=| {\tau}_n({-\mathrm{i}\lambda},{\bf a})|^2=\frac{|\rho_n(-\mathrm i\lambda)|}{|\rho_0|^2}.
  \end{equation}
 { Combining relations   \eqref{eq:Main_Results18},  \eqref{eq:Main_Results1_0}, \eqref{eq:Main_Results17}, \eqref{eq:Main_Results20} and \eqref{eq:Main_Results19}, we obtain the necessary equality for $I$:
 \[
 I=\tsfrac{2 \pi}{ \lambda}\sum_{k=0}^{s}\sum_{l=0}^{s} \tsfrac{ (-1)^{l} {l+k \choose  l   }}{  (2 {\mathrm i} \lambda)^{l+k}} D_k\overline{D}_l
 =\tsfrac{4 \pi}{(2 \lambda)^{2s+3}\mu_n(\lambda, {\bf a})}\sum_{k=0}^{s}\sum_{l=0}^{s}   \textstyle{l+k \choose  l} G_k\overline{G}_l,
 \]
where $G_k$ is given by \eqref{eq:Main_Results13}.}

\end{proof}

\vskip 3mm

{\bf\textsl Acknowledgments.} This work was partially supported by a grant from the Simons Foundation (SFI-PD-Ukraine-00014586, A.Sh.).


\end{document}